\documentclass{article}
\usepackage{amssymb,amsthm,mathtools}
\usepackage[pdftex,bookmarks=true]{hyperref}
\usepackage{doi}

\DeclareMathOperator{\tr}{tr}

\theoremstyle{theorem}
\newtheorem{thm}{Theorem}
\newtheorem{lem}{Lemma}
\newtheorem{prop}{Proposition}

\theoremstyle{definition}
\newtheorem{defn}{Definition}

\begin{document}
\title{On the Combinatorial Core of Second-Order Quantum Argument Shifts in $U\mathfrak{gl}_d$}
\author{Yasushi Ikeda}
\date{September 30, 2025}
\maketitle

\begin{abstract}
We provide a complete, self-contained proof of Theorem~4 of~\cite{Ikeda2024} that reduces second-order generators of the quantum argument-shift algebra in the universal enveloping algebra $U\mathfrak{gl}_d$. We prove the necessary combinatorial identities---expressed as relations among polynomials with rational coefficients---by induction.
\end{abstract}

\section{Introduction}
Suppose that $\mathfrak g$ is a complex Lie algebra and let $\xi$ be an element of the dual space $\mathfrak g^*$. We write $\bar\partial_\xi$ for the constant vector field in the direction $\xi$ and the Poisson center of the symmetric algebra $S\mathfrak g$ is denoted by $\bar C$. Mishchenko and Fomenko~\cite{Mishchenko1978} showed that the algebra $\bar C_\xi$ generated by the set $\bigcup_{n=0}^\infty\bar\partial_\xi^n\bar C$ is Poisson-commutative.

Vinberg~\cite{Vinberg1991} asked whether this argument-shift algebra $\bar C_\xi$ admits a natural quantization in the universal enveloping algebra $U\mathfrak g$. This question has since been resolved in various settings by several authors (see, e.g.,~\cite{Nazarov1996,Tarasov2000,Rybnikov2006,Feigin2010,Futorny2015,Molev2019}). Generators of quantum argument-shift algebras are discussed in~\cite{Tarasov2000,Futorny2015,Molev2019,Chervov2006,Molev2013}.

Another approach is based on the notion of quantum partial derivatives $\partial^i_j$ on the universal enveloping algebras $U\mathfrak{gl}_d$~\cite{Gurevich2012}. We defined the quantum argument-shift operator $\partial_\xi=\tr(\xi\partial)$ and showed a quantum analogue of the Mishchenko and Fomenko theorem~\cite{IkedaSharygin2024}.

We studied iterated quantum argument shifts of central elements up to second order~\cite{Ikeda2022,Ikeda2024}. The simplification of the second-order shifts relies on Theorem~4 of \cite{Ikeda2024}. We provide a complete, self-contained proof of this theorem.

Section~\ref{sec:preliminaries} sets up notation and elementary identities for binomial coefficients, together with a simple criterion for polynomial identities. Section~\ref{sec:main} restates the identities in binomial (Theorem~\ref{thm:binomial}) and polynomial (Theorem~\ref{thm:polynomial}) form. Section~\ref{sec:proof} proves Theorems~\ref{thm:binomial} and~\ref{thm:polynomial} by reducing them to three intermediate propositions---Lemmas~\ref{lem:polynomial} and~\ref{lem:binomial}---each established by induction.

\section{Preliminaries}\label{sec:preliminaries}
Suppose that $x$ is an indeterminate.

\subsection{Binomial Coefficients and Basic Identities}
\begin{defn}
We define
\begin{align*}
\binom{x}0=1,&&\binom{x}n=\frac{x(x-1)\dotsm(x-n+1)}{n!}\in\mathbb Q[x]
\end{align*}
for any nonnegative integer $n$.
\end{defn}

Proposition~\ref{prop:minus} is elementary.

\begin{prop}\label{prop:minus}
We have
\begin{enumerate}
\item
$\displaystyle\binom{x}n=(-1)^n\binom{n-1-x}n$ and
\item
Pascal’s identity $\displaystyle\binom{x}{n+1}=\binom{x-1}{n+1}+\binom{x-1}n$
\end{enumerate}
for any nonnegative integer $n$.
\end{prop}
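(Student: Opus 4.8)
The plan is to treat each identity as an equality in $\mathbb Q[x]$ and verify it by direct manipulation of the product in the definition; both parts are elementary, so no induction is needed here.

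For~(1), expand the right-hand side as
\[
(-1)^n\binom{n-1-x}n=\frac{(-1)^n\,(0-x)(1-x)\dotsm(n-1-x)}{n!}.
\]
Each of the $n$ factors $j-x$ (for $j=0,1,\dots,n-1$) equals $-(x-j)$, so pulling out the $n$ minus signs produces a factor $(-1)^n$ that cancels the prefactor; reordering the remaining factors gives $\frac{x(x-1)\dotsm(x-n+1)}{n!}=\binom xn$. The case $n=0$ is just the convention $\binom x0=1$.

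For~(2), factor the common block $P:=(x-1)(x-2)\dotsm(x-n)$ (an empty product equal to $1$ when $n=0$) out of the two terms on the right:
\[
\binom{x-1}{n+1}=\frac{P\,(x-1-n)}{(n+1)!},\qquad
\binom{x-1}n=\frac P{n!}=\frac{(n+1)\,P}{(n+1)!}.
\]
Adding these and using $(x-1-n)+(n+1)=x$ yields $\frac{x\,P}{(n+1)!}=\frac{x(x-1)\dotsm(x-n)}{(n+1)!}=\binom x{n+1}$.

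An alternative, perhaps more in the spirit of the polynomial-identity criterion set up in this section, is to observe that in each part both sides are polynomials in $x$ of degree at most $n$ (resp.\ $n+1$), so it suffices to check the identity at infinitely many values of $x$. Evaluating~(1) at $x=-m$ for every positive integer $m$ turns both sides into the genuine binomial coefficient $(-1)^n\binom{m+n-1}n$, and evaluating~(2) at every integer $x\ge n+1$ reduces it to the classical Pascal recursion on honest binomial coefficients. I do not expect any real obstacle: the only points needing care are the bookkeeping of the $n$ sign flips and the reordering of factors in~(1), and the empty-product convention that makes~(2) valid already at $n=0$.
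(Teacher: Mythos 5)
Your proof is correct. The paper itself offers no argument for this proposition---it simply declares it elementary and moves on---so your direct expansion of the defining products (tracking the $n$ sign flips in part~(1) and factoring out $P=(x-1)\dotsm(x-n)$ in part~(2)) is exactly the kind of routine verification the author is leaving to the reader, and both computations check out, including the $n=0$ edge cases.
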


We introduce the convention used in the statement of Theorem~\ref{thm:polynomial}.

\begin{defn}
Suppose that $m$ and $n$ are complex numbers such that the complex number $n-m$ is a nonnegative integer. We define $\displaystyle\binom{n}m=\binom{n}{n-m}$.
\end{defn}

We have $\displaystyle\binom{-1}{-1}=\binom{-1}0=1$.

\subsection{Uniqueness from Values}
Proposition~\ref{prop:zero} is a useful criterion for proving polynomial identities.

\begin{prop}\label{prop:zero}
Suppose that $f(x)$ is a polynomial in the one indeterminate $x$ over an integral domain. We have $f(x)=0$ if and only if $\deg f(x)<\#f^{-1}(0)$.
\end{prop}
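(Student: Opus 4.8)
The plan is to prove the two implications separately, treating the forward direction as bookkeeping about the degree convention and concentrating the real work on the backward direction, which is the familiar statement that a nonzero polynomial over an integral domain has at most $\deg f(x)$ roots.

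For the forward direction, if $f(x)=0$ then every element of the domain $R$ is a root, so $f^{-1}(0)=R$; since $0\neq 1$ in an integral domain we have $\#R\ge 2$, while $\deg 0=-\infty$ under the usual convention, so $\deg f(x)<\#f^{-1}(0)$ holds trivially. (If one prefers the convention $\deg 0=-1$, the same comparison still holds.)

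For the backward direction I would argue by contraposition: assuming $f(x)\neq 0$, I claim $\#f^{-1}(0)\le\deg f(x)$. I would prove this by induction on $d=\deg f(x)\ge 0$. In the base case $d=0$, $f$ is a nonzero constant and has no roots, so $\#f^{-1}(0)=0$. For $d\ge 1$: if $f$ has no root we are done; otherwise pick $a\in f^{-1}(0)$ and invoke the factor theorem — valid over any commutative ring, since $f(x)=f(x)-f(a)$ is a combination of the differences $x^k-a^k$, each divisible by $x-a$ — to write $f(x)=(x-a)g(x)$ with $g(x)$ nonzero of degree $d-1$ (the leading coefficient of $f$ being carried over to $g$). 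Then for any $b\in f^{-1}(0)$ we have $(b-a)g(b)=0$, and since $R$ has no zero divisors, $b=a$ or $b\in g^{-1}(0)$; hence $f^{-1}(0)\subseteq\{a\}\cup g^{-1}(0)$, so $\#f^{-1}(0)\le 1+\#g^{-1}(0)\le 1+(d-1)=d$ by the inductive hypothesis.

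The only point requiring a little care — and the closest thing to an obstacle — is the interface between the degree convention for the zero polynomial and the cardinality $\#f^{-1}(0)$, which may be an infinite cardinal; once one fixes $\deg 0=-\infty$ and agrees that $-\infty$ lies below every finite or infinite cardinal, the argument is routine. I would also note that the integral-domain hypothesis is used exactly once, in the implication $(b-a)g(b)=0\Rightarrow b=a$ or $g(b)=0$, which is precisely where the statement fails for general commutative rings.
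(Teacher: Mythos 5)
Your proof is correct, and it takes a different route from the paper's at the one step that matters. The paper disposes of the whole proposition in a single line: assuming $f(x)\neq0$, it invokes the embedding of the integral domain into the algebraic closure of its field of fractions and cites the standard bound $\#f^{-1}(0)\leq\deg f(x)$ for nonzero polynomials over a field. You instead prove that bound from scratch, working inside the domain itself: induction on the degree, the factor theorem (valid over any commutative ring via the divisibility of $x^k-a^k$ by $x-a$), and the no-zero-divisors hypothesis to conclude $f^{-1}(0)\subseteq\{a\}\cup g^{-1}(0)$. Your version is more self-contained and makes visible exactly where integrality is used --- the implication $(b-a)g(b)=0\Rightarrow b=a$ or $g(b)=0$ --- whereas the paper's is shorter at the cost of outsourcing the root count to field theory. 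You also treat the forward direction ($f=0\Rightarrow\deg f<\#f^{-1}(0)$) explicitly, including the cardinality and degree-convention bookkeeping, which the paper leaves implicit; that is a minor completeness gain, not a disagreement. Both arguments are sound.
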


\begin{proof}
Suppose $f(x)\neq0$. We have $\#f^{-1}(0)\leq\deg f(x)$ since any integral domain embeds into the algebraic closure of its field of fractions.
\end{proof}

\section{Main Results}\label{sec:main}
Theorem~4 in \cite{Ikeda2024} is equivalent to Proposition~4.1 in the same paper. The second part of Proposition~4.1 is
\begin{multline}\label{eq:evenpolynomial}
x^{2n}\frac{(x+1)^m+(x-1)^m}2+\frac{(x+1)^{m+2n}+(x-1)^{m+2n}}2\\
=\sum_{k=0}^n\biggl(\binom{2n-k}k+\binom{2n-k-1}{k-1}\biggr)x^k\frac{(x+1)^{m+k}+(x-1)^{m+k}}2
\end{multline}
and
\begin{multline}\label{eq:oddpolynomial}
x^{2n+1}\frac{(x+1)^m+(x-1)^m}2+\frac{(x+1)^{m+2n+1}+(x-1)^{m+2n+1}}2\\
=\sum_{k=0}^n\binom{2n-k}k\biggl(x^{k+1}\frac{(x+1)^{m+k}+(x-1)^{m+k}}2\\
+x^k\frac{(x+1)^{m+k+1}+(x-1)^{m+k+1}}2\biggr)
\end{multline}
for any nonnegative integers $m$ and $n$ since
$$f^{(n)}_+(x)=\sum_{m=0}^{n+1}\frac{1+(-1)^{n-m}}2\binom{n}mx^m=\frac{(x+1)^n+(x-1)^n}2$$
for any nonnegative integer $n$. The equations~\eqref{eq:evenpolynomial} and~\eqref{eq:oddpolynomial} follow from
$$x^{2n}+(x+1)^{2n}=\sum_{m=0}^n\biggl(\binom{2n-m}m+\binom{2n-m-1}{m-1}\biggr)x^m(x+1)^m$$
and
$$x^{2n+1}+(x+1)^{2n+1}=\sum_{m=0}^n\binom{2n-m}m\bigl(x^{m+1}(x+1)^m+x^m(x+1)^{m+1}\bigr)$$
for any nonnegative integer $n$. Theorem~4 in~\cite{Ikeda2024} reduces to Theorems~\ref{thm:binomial} and~\ref{thm:polynomial}.

\begin{thm}\label{thm:binomial}
We have
\begin{multline*}
\binom{2n_1+n_2+2n_3+1}{2n_3}+\binom{n_2+2n_3}{2n_3}\\
=\sum_{n_4=0}^{n_3}\biggl(\binom{n_1+n_2+n_3+n_4+1}{2n_4}+\binom{n_1+n_2+n_3+n_4}{2n_4}\biggr)\\
\binom{n_1+n_3-n_4}{2(n_3-n_4)}
\end{multline*}
and
\begin{multline*}
\binom{2n_1+n_2+2n_3+2}{2n_3}+\binom{n_2+2n_3}{2n_3}\\
=\sum_{n_4=0}^{n_3}\binom{n_1+n_2+n_3+n_4+1}{2n_4}\\
\biggl(\binom{n_1+n_3-n_4+1}{2(n_3-n_4)}+\binom{n_1+n_3-n_4}{2(n_3-n_4)}\biggr)
\end{multline*}
for any nonnegative integers $(n_m)_{m=1}^3$.
\end{thm}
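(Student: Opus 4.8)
My plan is to read the two displayed identities as the coefficient of $y^{n_3}$ in polynomial identities for a family of Chebyshev-type polynomials, both of which in turn reduce to a single product formula proved by a short induction. For every integer $n$ put $G_n(y):=\sum_{j\ge 0}\binom{n+j}{2j}y^j\in\mathbb Q[y]$; the conventions of Section~\ref{sec:preliminaries} (in particular $\binom{-1}{0}=1$) make this a well-defined polynomial for every integer $n$, and Proposition~\ref{prop:minus} gives the symmetry $G_{-1-n}=G_n$. By construction $[y^k]G_N=\binom{N+k}{2k}$, equivalently $\binom{N}{2k}=[y^k]G_{N-k}$, and repeated use of Pascal's identity (Proposition~\ref{prop:minus}) shows that $G_n$ obeys the recurrence $G_{n+1}=(2+y)G_n-G_{n-1}$ with $G_0=1$, $G_1=1+y$.

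The key step is the translation. Using $\binom{N}{2k}=[y^k]G_{N-k}$, the left side of the first identity equals $[y^{n_3}]\bigl(G_{2n_1+n_2+n_3+1}+G_{n_2+n_3}\bigr)$; and since $\binom{n_1+n_2+n_3+n_4+1}{2n_4}=[y^{n_4}]G_{n_1+n_2+n_3+1}$, $\binom{n_1+n_2+n_3+n_4}{2n_4}=[y^{n_4}]G_{n_1+n_2+n_3}$, and $\binom{n_1+n_3-n_4}{2(n_3-n_4)}=[y^{n_3-n_4}]G_{n_1}$, its right side is recognized as $[y^{n_3}]\bigl((G_{n_1+n_2+n_3+1}+G_{n_1+n_2+n_3})G_{n_1}\bigr)$ by reading the sum over $n_4$ as a Cauchy product (the range $0\le n_4\le n_3$ is exactly the Cauchy-product range, so nothing is lost). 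Hence the first identity follows from the polynomial identity $G_{2a+m+1}+G_m=\bigl(G_{a+m+1}+G_{a+m}\bigr)G_a$, and the same computation --- now also using $\binom{n_1+n_3-n_4+1}{2(n_3-n_4)}=[y^{n_3-n_4}]G_{n_1+1}$ --- shows the second identity follows from $G_{2a+m+2}+G_m=G_{a+m+1}\bigl(G_{a+1}+G_a\bigr)$; in both cases one sets $a=n_1$ and $m=n_2+n_3$.

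To prove these, set $T_p:=G_p+G_{p-1}$, so that $T_{p+1}=(2+y)T_p-T_{p-1}$ with $T_0=2$, $T_1=2+y$, and establish the product formula $T_pG_q=G_{p+q}+G_{q-p}$ for all integers $p,q$ by induction on $p$: the base cases $p=0$ and $p=1$ hold trivially and by the recurrence respectively, and the inductive step reduces to applying $(2+y)G_n=G_{n+1}+G_{n-1}$ at $n=p+q$ and at $n=q-p$. (Alternatively one can pass to $\mathbb Q[w,w^{-1}]$ with $y:=w+w^{-1}-2$, where $G_n=(w^{n+1}+w^{-n})/(w+1)$ and the formula is immediate.) The two polynomial identities above are then the instances $(p,q)=(a+m+1,a)$ and $(p,q)=(a+1,a+m+1)$, using $G_{a+m+1}+G_{a+m}=T_{a+m+1}$, $G_{a+1}+G_a=T_{a+1}$, and $G_{-(m+1)}=G_{-1-m}=G_m$. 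The crux of the whole argument is the translation --- both spotting the right generating functions $G_n$ and getting every index shift correct, together with the Cauchy-product reading; once that is in place the polynomial side is a routine Chebyshev-type induction, and I would package the expansion $G_n=\sum_j\binom{n+j}{2j}y^j$ with its recurrence, the symmetry $G_{-1-n}=G_n$, and the product formula as the inductive ingredients and then assemble Theorem~\ref{thm:binomial} from them.
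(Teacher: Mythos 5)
Your proof is correct --- I checked the index bookkeeping in the translation step (e.g.\ $\binom{N}{2k}=[y^k]G_{N-k}$, the identification of both right-hand sides as Cauchy products, and the two specializations $(p,q)=(a+m+1,a)$ and $(a+1,a+m+1)$ with $G_{-1-m}=G_m$), as well as the recurrence $G_{n+1}=(2+y)G_n-G_{n-1}$ and the induction for $T_pG_q=G_{p+q}+G_{q-p}$; all of it goes through. Your route is a genuinely different packaging of what turns out to be the same core fact: the paper reduces Theorem~\ref{thm:binomial} to Lemma~\ref{lem:polynomial}, whose first equation is precisely the coefficient-of-$y^n$ form of your product formula $T_yG_x=G_{x+y}+G_{x-y}$ (with $[y^m]G_x=\binom{x+m}{2m}$ and $T_y=G_y+G_{y-1}$), and it specializes $x=n_1$, $y=n_1+n_2+n_3+1$ exactly as you do, using Proposition~\ref{prop:minus} for $\binom{-n_2-1}{2n_3}=\binom{n_2+2n_3}{2n_3}$ where you use $G_{-(m+1)}=G_m$. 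The difference is in how the key identity is proved: the paper works coefficient-wise, reduces to integer $x$ via Proposition~\ref{prop:zero}, and runs an induction on $x$ via Pascal's identity that necessarily intertwines the even identity with an odd companion ($\binom{x+m}{2m+1}$ in place of $\binom{x+m}{2m}$); your generating-function formulation proves the product formula directly as a polynomial identity by a one-parameter induction on the three-term recurrence, needs no odd companion and no degree-counting argument, and makes the symmetry $G_{-1-n}=G_n$ and the Chebyshev structure (the substitution $y=w+w^{-1}-2$) visible. What the paper's version buys is that its coefficient-wise lemma is stated for two independent indeterminates $x,y$ and so is directly quotable in the form needed; what yours buys is a shorter, more conceptual proof of the same convolution identity.
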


\begin{thm}\label{thm:polynomial}
We have
$$x^{2n}+(x+1)^{2n}=\sum_{m=0}^n\biggl(\binom{2n-m}m+\binom{2n-m-1}{m-1}\biggr)x^m(x+1)^m$$
and
$$x^{2n+1}+(x+1)^{2n+1}=\sum_{m=0}^n\binom{2n-m}m\bigl(x^{m+1}(x+1)^m+x^m(x+1)^{m+1}\bigr)$$
for any nonnegative integer $n$.
\end{thm}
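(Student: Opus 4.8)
The plan is to route both identities through a single auxiliary fact about the Fibonacci-type polynomials $A_N(t):=\sum_{k=0}^{\lfloor N/2\rfloor}\binom{N-k}{k}t^k\in\mathbb Q[t]$, namely that
\[(2x+1)\,A_N\bigl(x(x+1)\bigr)=(x+1)^{N+1}+(-1)^Nx^{N+1}\]
holds in $\mathbb Q[x]$ for every nonnegative integer $N$ (the point being that $x$ and $-(x+1)$ are the roots of $\lambda^2-\lambda-x(x+1)$, so $A_N$ at $x(x+1)$ is the corresponding Binet-type sum). Granting this, Theorem~\ref{thm:polynomial} comes out quickly. For the odd identity, the bracket on its right-hand side factors as $x^{m+1}(x+1)^m+x^m(x+1)^{m+1}=(2x+1)\bigl(x(x+1)\bigr)^m$, so the whole right-hand side equals $(2x+1)A_{2n}\bigl(x(x+1)\bigr)$, which by the auxiliary fact with $N=2n$ is exactly $(x+1)^{2n+1}+x^{2n+1}$. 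For the even identity I would dispose of $n=0$ by hand ($2=1+\binom{-1}{-1}$) and, for $n\ge1$, reindex: the first binomial sum is $A_{2n}\bigl(x(x+1)\bigr)$ verbatim, while in $\sum_{m=0}^n\binom{2n-m-1}{m-1}x^m(x+1)^m$ the $m=0$ term vanishes (since $\binom{2n-1}{-1}=\binom{2n-1}{2n}=0$) and the shift $m\mapsto m+1$ turns it into $x(x+1)\,A_{2n-2}\bigl(x(x+1)\bigr)$. Multiplying the target even identity by $2x+1$ and invoking the auxiliary fact for $N=2n$ and $N=2n-2$ reduces it to the elementary polynomial identity $\bigl[(x+1)^{2n+1}+x^{2n+1}\bigr]+x(x+1)\bigl[(x+1)^{2n-1}+x^{2n-1}\bigr]=(2x+1)\bigl[(x+1)^{2n}+x^{2n}\bigr]$; cancelling the common factor $2x+1$ is legitimate because $\mathbb Q[x]$ is an integral domain (cf.\ Proposition~\ref{prop:zero}).

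It remains to prove the auxiliary fact, which I would do by induction on $N$. The engine is the recurrence $A_N=A_{N-1}+t\,A_{N-2}$ for $N\ge2$: applying Pascal's identity (Proposition~\ref{prop:minus}) in the form $\binom{N-k}{k}=\binom{N-1-k}{k}+\binom{N-1-k}{k-1}$ and reindexing the second piece by $k\mapsto k+1$ gives this at once, once one checks that the boundary contributions vanish — the top term $k=\lfloor N/2\rfloor$ of the first sum is $\binom{N-1-\lfloor N/2\rfloor}{\lfloor N/2\rfloor}=0$, and the $k=0$ term of the second sum is $\binom{N-1}{-1}=\binom{N-1}{N}=0$. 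The base cases $N=0,1$ of the auxiliary fact are immediate: both sides equal $2x+1$. For the inductive step, multiply the recurrence by $2x+1$, substitute $t=x(x+1)$, and apply the inductive hypothesis to $N-1$ and $N-2$, obtaining $(2x+1)A_N\bigl(x(x+1)\bigr)=\bigl[(x+1)^N+(-1)^{N-1}x^N\bigr]+x(x+1)\bigl[(x+1)^{N-1}+(-1)^{N-2}x^{N-1}\bigr]$; a one-line rearrangement, collecting the $(x+1)^N$ terms and the $x^N$ terms, simplifies this to $(x+1)^{N+1}+(-1)^Nx^{N+1}$.

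Conceptually there is very little here, so the only real work — and the place where care is needed — is the bookkeeping around the binomial coefficients: the truncation of the defining sum of $A_N$ at $\lfloor N/2\rfloor$, the behaviour of $\binom{\cdot}{\cdot}$ at negative or oversized arguments (including the convention $\binom{-1}{-1}=1$ that makes the $n=0$ case of the even identity work), and the verification that no stray term is created or lost when the recurrence is reindexed. I expect this edge-case accounting, rather than any of the algebra, to be the main obstacle to a genuinely self-contained write-up.
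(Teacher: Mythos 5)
Your proof is correct, but it takes a genuinely different route from the paper. You package the right-hand sides as evaluations of the Fibonacci-type polynomial $A_N(t)=\sum_k\binom{N-k}{k}t^k$ at $t=x(x+1)$ and run a single induction on $N$ via the recurrence $A_N=A_{N-1}+tA_{N-2}$, exploiting the fact that $x+1$ and $-x$ are the roots of $\lambda^2-\lambda-x(x+1)$; the only bookkeeping is the truncation of the sum at $\lfloor N/2\rfloor$ and the vanishing of $\binom{2n-1}{-1}=\binom{2n-1}{2n}$, both of which you handle correctly (as is the $n=0$ case of the even identity via $\binom{-1}{-1}=1$, and the cancellation of $2x+1$ in the integral domain $\mathbb Q[x]$). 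The paper instead expands $(x+1)^m=\sum_k\binom{m}{k}x^k$, extracts the coefficient of each power $x^m$, and identifies it through the auxiliary sums $S_n(x)=\sum_m\binom{x-m}{m}\binom{m}{n-m}$; this requires Lemma~\ref{lem:binomial}, namely a two-variable Vandermonde-type convolution $\sum_m\binom{x-m}{m}\binom{y+m}{n-m}=S_n(x+y)$ and the decomposition $\binom{x}{n}=S_n(x)+S_{n-1}(x-1)$, each proved by its own induction combined with Proposition~\ref{prop:zero}. Your argument is shorter and more conceptual, treating $x(x+1)$ as a single variable and never descending to individual coefficients; the paper's approach buys reusable binomial identities ($S_n$ and its convolution property) at the cost of a more intricate double induction. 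Both are complete proofs of Theorem~\ref{thm:polynomial}.
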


One can verify Theorem~\ref{thm:polynomial} using the following Mathematica code.
\begin{verbatim}
In[1]:=
Simplify[
  x^(2 n) + (x + 1)^(2 n) -
    Sum[
      (Binomial[2 n - m, m] + Binomial[2 n - m - 1, m - 1])*
        x^m*(x + 1)^m,
      {m, 0, n}]]
Simplify[
  x^(2 n + 1) + (x + 1)^(2 n + 1) -
    Sum[
      Binomial[2 n - m, m]*
        (x^(m + 1)*(x + 1)^m + x^m*(x + 1)^(m + 1)),
      {m, 0, n}]]
Out[1]= 0
Out[2]= 0
\end{verbatim}

\section{Proof of the Main Results}\label{sec:proof}
We give proofs of Theorems~\ref{thm:binomial} and~\ref{thm:polynomial}. Suppose that $x$ and $y$ are commutative indeterminates.

\subsection{Proof of Theorem~\ref{thm:binomial}}
Theorem~\ref{thm:binomial} reduces to Lemma~\ref{lem:polynomial}.

\begin{lem}\label{lem:polynomial}
We have
\begin{multline}\label{eq:evenbinomial}
\binom{x+y+n}{2n}+\binom{x-y+n}{2n}\\
=\sum_{m=0}^n\binom{x+m}{2m}\biggl(\binom{y+n-m}{2(n-m)}+\binom{y-1+n-m}{2(n-m)}\biggr)
\end{multline}
and
\begin{multline*}
\binom{x+y+n}{2n+1}+\binom{x-y+n}{2n+1}\\
=\sum_{m=0}^n\binom{x+m}{2m+1}\biggl(\binom{y+n-m}{2(n-m)}+\binom{y-1+n-m}{2(n-m)}\biggr)
\end{multline*}
for any nonnegative integer $n$.
\end{lem}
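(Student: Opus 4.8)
The plan is to prove the two equations of Lemma~\ref{lem:polynomial} together by induction on $n$. For a nonnegative integer $n$ let $D_n(x,y)\in\mathbb Q[x,y]$ denote the left-hand side of the first equation minus its right-hand side, and let $D_n'(x,y)$ denote the corresponding difference for the second equation; the goal is that $D_n=D_n'=0$ for every $n$. The case $n=0$ is a direct check: the first equation reduces to $2=2$ and the second to $2x=2x$.

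For the inductive step the idea is to run a secondary induction on the variable $x$, powered by Pascal's identity (Proposition~\ref{prop:minus}(2)). Applying it twice yields, for $n\geq1$,
$$\binom a{2n}=\binom{a-1}{2n}+\binom{a-2}{2n-1}+\binom{a-2}{2n-2},\qquad\binom a{2n+1}=\binom{a-1}{2n+1}+\binom{a-1}{2n}.$$
Using the first of these with $a=x+y+n$ and with $a=x-y+n$ expresses the left-hand side of the first equation at $(x,y)$ as the sum of three terms, which one recognizes as the left-hand sides of, respectively, the first equation at $(x-1,y)$, the first equation at $(x-1,y)$ with $n$ replaced by $n-1$, and the second equation at $(x-1,y)$ with $n$ replaced by $n-1$. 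Feeding the same two binomial relations into the summands on the right-hand sides, and shifting the summation index by one in the case of the first equation, rewrites the right-hand sides in precisely the same fashion. Subtracting, one obtains the recursions
$$D_n(x,y)=D_n(x-1,y)+D_{n-1}(x-1,y)+D_{n-1}'(x-1,y)\qquad(n\geq1)$$
and, from the second binomial relation applied to the second equation alone,
$$D_n'(x,y)=D_n'(x-1,y)+D_n(x-1,y)\qquad(n\geq0).$$

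Now assume $D_{n-1}=D_{n-1}'=0$. The first recursion collapses to $D_n(x,y)=D_n(x-1,y)$; iterating this polynomial identity gives $D_n(k,y)=D_n(0,y)$ for every nonnegative integer $k$, so $D_n(x,y)-D_n(0,y)$ is a polynomial in $x$ over the integral domain $\mathbb Q[y]$ with infinitely many roots and hence vanishes by Proposition~\ref{prop:zero}. It remains to check that the boundary value $D_n(0,y)$ is zero: on the right-hand side every summand with $m\geq1$ dies because $\binom m{2m}=0$, leaving $\binom{y+n}{2n}+\binom{y-1+n}{2n}$, while on the left-hand side the negation identity $\binom{n-y}{2n}=\binom{y+n-1}{2n}$ of Proposition~\ref{prop:minus}(1) yields the same expression. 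Thus $D_n=0$, and then the second recursion collapses to $D_n'(x,y)=D_n'(x-1,y)$, which by the same reasoning forces $D_n'=D_n'(0,y)$; here every right-hand summand carries a vanishing factor $\binom m{2m+1}$ and $\binom{n-y}{2n+1}=-\binom{y+n}{2n+1}$, so $D_n'(0,y)=0$. This closes the induction.

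The real content is the verification of the two displayed recursions; I expect the main nuisance—rather than a genuine obstacle—to be the bookkeeping on the right-hand side of the first equation, where the $m=0$ summand must be split off before Pascal's identity is applied and then seen to cancel exactly the boundary term generated by re-indexing, so that nothing spurious remains.
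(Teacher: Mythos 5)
Your proof is correct and is essentially the paper's argument with the induction reorganized: the paper inducts on the nonnegative integer $x$ alone, first deriving the odd identity at $x$ from both identities at $x-1$ (your second recursion) and then the even identity at $(x,n)$ from the even identity at $(x-1,n)$ together with the just-proved odd identity at $(x,n-1)$, which is exactly your first recursion with the two Pascal steps composed. The ingredients are identical---Pascal's identity for the recursions, Proposition~\ref{prop:zero} to reduce to integer values of $x$, and the negation identity of Proposition~\ref{prop:minus} for the boundary case $x=0$---and your bookkeeping at the $m=0$ term and in the re-indexing is sound.
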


One can verify the equation~\eqref{eq:evenbinomial} in Lemma~\ref{lem:polynomial} over a finite grid of parameter values using the following Mathematica code.
\begin{verbatim}
In[3]:=
lhs[x_, y_, n_] :=
  Binomial[x + y + n, 2 n] +
  Binomial[x - y + n, 2 n];
rhs[x_, y_, n_] :=
  Sum[
    Binomial[x + m, 2 m]*
      (Binomial[y + n - m, 2 (n - m)] +
       Binomial[y - 1 + n - m, 2 (n - m)]),
    {m, 0, n}];
AllTrue[
  Flatten @ Table[
    lhs[x, y, n] == rhs[x, y, n],
    {x, -20, 20}, {y, -20, 20}, {n, 0, 20}],
  TrueQ]
Out[5]= True
\end{verbatim}

\begin{proof}[Proof of Theorem~\ref{thm:binomial}]
We have
\begin{align*}
\MoveEqLeft\sum_{n_4=0}^{n_3}\biggl(\binom{n_1+n_2+n_3+n_4+1}{2n_4}+\binom{n_1+n_2+n_3+n_4}{2n_4}\biggr)\binom{n_1+n_3-n_4}{2(n_3-n_4)}\\
&=\binom{2n_1+n_2+2n_3+1}{2n_3}+\binom{-n_2-1}{2n_3}\\
&=\binom{2n_1+n_2+2n_3+1}{2n_3}+\binom{n_2+2n_3}{2n_3}
\end{align*}
and
\begin{multline*}
\sum_{n_4=0}^{n_3}\binom{n_1+n_2+n_3+n_4+1}{2n_4}\biggl(\binom{n_1+n_3-n_4+1}{2(n_3-n_4)}+\binom{n_1+n_3-n_4}{2(n_3-n_4)}\biggr)\\
=\binom{2n_1+n_2+2n_3+2}{2n_3}+\binom{n_2+2n_3}{2n_3}
\end{multline*}
by Proposition~\ref{prop:minus} and Lemma~\ref{lem:polynomial}.
\end{proof}

\begin{proof}[Proof of Lemma~\ref{lem:polynomial}]
We may assume that the indeterminate $x$ is a nonnegative integer by Proposition~\ref{prop:zero} since the polynomial algebra $\mathbb Q[y]$ is an integral domain. The proof is by induction on the nonnegative integer $x$. The theorem holds for $x=0$ by Proposition~\ref{prop:minus}. Suppose $x>0$. We have
\begin{align}\label{eq:oddbinomial}
\MoveEqLeft\binom{x+y+n}{2n+1}+\binom{x-y+n}{2n+1}\notag\\
&=\binom{x-1+y+n}{2n+1}+\binom{x-1-y+n}{2n+1}+\binom{x-1+y+n}{2n}+\binom{x-1-y+n}{2n}\notag\\
&=\sum_{m=0}^n\biggl(\binom{x-1+m}{2m+1}+\binom{x-1+m}{2m}\biggr)\biggl(\binom{y+n-m}{2(n-m)}+\binom{y-1+n-m}{2(n-m)}\biggr)\notag\\
&=\sum_{m=0}^n\binom{x+m}{2m+1}\biggl(\binom{y+n-m}{2(n-m)}+\binom{y-1+n-m}{2(n-m)}\biggr)
\end{align}
by Pascal's identity and the induction hypothesis. We prove the equation~\eqref{eq:evenbinomial}. We have
$$\binom{x+y}0+\binom{x-y}0=\binom{x}0\biggl(\binom{y}0+\binom{y-1}0\biggr)=2$$
and may assume $n>0$. We have
\begin{multline*}
\binom{x+y+n}{2n}+\binom{x-y+n}{2n}=\binom{x-1+y+n}{2n}+\binom{x-1-y+n}{2n}\\
+\binom{x+y+n-1}{2n-1}+\binom{x-y+n-1}{2n-1}
\end{multline*}
by Pascal's identity. We have
\begin{multline*}
\binom{x-1+y+n}{2n}+\binom{x-1-y+n}{2n}\\
=\sum_{m=0}^n\binom{x-1+m}{2m}\biggl(\binom{y+n-m}{2(n-m)}+\binom{y-1+n-m}{2(n-m)}\biggr)
\end{multline*}
by the induction hypothesis and
\begin{align*}
\MoveEqLeft\binom{x+y+n-1}{2n-1}+\binom{x-y+n-1}{2n-1}\\
&=\sum_{m=0}^{n-1}\binom{x+m}{2m+1}\biggl(\binom{y+n-1-m}{2(n-1-m)}+\binom{y-1+n-1-m}{2(n-1-m)}\biggr)\\
&=\sum_{m=1}^n\binom{x+m-1}{2m-1}\biggl(\binom{y+n-m}{2(n-m)}+\binom{y-1+n-m}{2(n-m)}\biggr)
\end{align*}
by the equation~\eqref{eq:oddbinomial}. We have the equation~\eqref{eq:evenbinomial} by Pascal's identity.
\end{proof}

\subsection{Proof of Theorem~\ref{thm:polynomial}}
We define
$$S_n(x)=\sum_{m=0}^n\binom{x-m}m\binom{m}{n-m}$$
for any integer $n$. Theorem~\ref{thm:polynomial} reduces to Lemma~\ref{lem:binomial}.

\begin{lem}\label{lem:binomial}
We have
\begin{enumerate}
\item
$\displaystyle\sum_{m=0}^n\binom{x-m}m\binom{y+m}{n-m}=S_n(x+y)$ and
\item
$\displaystyle\binom{x}n=S_n(x)+S_{n-1}(x-1)$
\end{enumerate}
for any nonnegative integer $n$.
\end{lem}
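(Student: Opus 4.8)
The plan is to funnel both parts through a single auxiliary recurrence for $S_n$, namely
\[
S_n(z)=S_n(z-1)+S_{n-1}(z-1)
\]
for every nonnegative integer $n$ (with $S_{-1}=0$, as given by the empty sum), and then to close each part by induction. I would prove this recurrence first, by induction on $n$. The case $n=0$ is immediate since $S_0\equiv1$. For $n\geq1$, apply Pascal's identity (Proposition~\ref{prop:minus}) to $\binom{z-m}m$ inside $S_n(z)-S_n(z-1)$ to obtain $\sum_m\binom{z-1-m}{m-1}\binom m{n-m}$, re-index by $m=j+1$, and apply Pascal's identity again to $\binom{j+1}{n-1-j}$; the two resulting sums are $S_{n-1}(z-2)$ and $S_{n-2}(z-2)$, and the induction hypothesis at level $n-1$ rewrites their sum as $S_{n-1}(z-1)$.

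For part~(1) I would first note that both sides are polynomials in $y$ of degree at most $n$ with coefficients in the integral domain $\mathbb Q[x]$, so by Proposition~\ref{prop:zero} it is enough to prove the identity when $y$ is a nonnegative integer, with $x$ kept as an indeterminate. I would then use a nested induction: outer on $n$, inner on $y$. The outer base case $n=0$ is trivial, and the inner base case $y=0$ is exactly the definition of $S_n(x)$. For the inner step, Pascal's identity applied to $\binom{y+m}{n-m}$ splits $\sum_{m=0}^n\binom{x-m}m\binom{y+m}{n-m}$ as $\sum_{m=0}^n\binom{x-m}m\binom{y-1+m}{n-m}+\sum_{m=0}^n\binom{x-m}m\binom{y-1+m}{n-1-m}$; by the inner hypothesis the first sum is $S_n(x+y-1)$ and by the outer hypothesis the second is $S_{n-1}(x+y-1)$, and the recurrence identifies the total with $S_n(x+y)$.

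For part~(2) I would induct on $n$ and track the polynomial $E_n(x)=\binom xn-S_n(x)-S_{n-1}(x-1)$, aiming to show $E_n\equiv0$. Pascal's identity for $\binom xn$ together with two applications of the recurrence give $E_n(x)-E_n(x-1)=-E_{n-1}(x-1)$, which vanishes by the induction hypothesis; hence $E_n(x)=E_n(x-1)$ identically, so $E_n$ is a constant polynomial. To evaluate the constant I would compute $S_k(k)$: a summand $\binom{k-m}m\binom m{k-m}$ of $S_k(k)$ can be nonzero only when $k-m\leq m$ (so that $\binom m{k-m}\neq0$) and, since $0\leq m\leq k$, only when $m\leq k-m$ (so that $\binom{k-m}m\neq0$), forcing $m=k-m$; thus $S_k(k)=1$ if $k$ is even and $S_k(k)=0$ if $k$ is odd. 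Since exactly one of $n$ and $n-1$ is even (for $n\geq1$), $E_n(n)=\binom nn-S_n(n)-S_{n-1}(n-1)=1-1=0$, so $E_n\equiv0$; the base case $n=0$ is $E_0\equiv0$.

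The fussiest point will be getting the re-indexing and boundary conventions right in the proof of the recurrence: Pascal's identity produces terms with lower index $-1$ at the ends of the summation ranges (for instance $\binom{z-1}{-1}$), which must be read as $0$ --- equivalently, separated off as vanishing summands --- and it is easy to slip an off-by-one here. The other point requiring a moment's thought is spotting the clean evaluation $S_k(k)\in\{0,1\}$ that fixes the constant in part~(2). Everything else --- the nested induction for part~(1) and the degree bookkeeping that licenses the appeal to Proposition~\ref{prop:zero} --- is routine once the recurrence is available.
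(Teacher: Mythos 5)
Your proof is correct, and it rests on the same two pillars as the paper's: the first-difference recurrence $S_n(z)-S_n(z-1)=S_{n-1}(z-1)$ and the evaluation $S_k(k)=\frac{1+(-1)^k}{2}$. The organization, however, is genuinely different. You establish the recurrence as a standalone statement by its own induction on $n$ (Pascal applied twice, once to $\binom{z-m}{m}$ and once, after re-indexing, to $\binom{j+1}{n-1-j}$), whereas the paper derives the same recurrence from part~(1): after the shift $m\mapsto m+1$, the sum $\sum_{m=1}^n\binom{x-1-m}{m-1}\binom{m}{n-m}$ is an instance of part~(1) at level $n-1$ with $y=1$, so your double-Pascal computation is in effect an inlined special case of that identity. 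Your part~(1) also takes a different route: you induct on the integer $y$ alone and absorb the two Pascal pieces via the recurrence, while the paper keeps both $x$ and $y$ as nonnegative integers, shows the left-hand side is invariant under $(x,y)\mapsto(x+1,y-1)$ by computing the $x$- and $y$-increments separately (each equal to $S_{n-1}(x+y)$ by the induction hypothesis on $n$), and anchors at $y=0$. Part~(2) coincides with the paper's argument (constant first difference plus evaluation at $x=n$). What your arrangement buys is that the recurrence is available up front and part~(2) no longer depends logically on part~(1); the cost is one additional induction that the paper avoids. Two minor points: the sign should be $E_n(x)-E_n(x-1)=+E_{n-1}(x-1)$ rather than $-E_{n-1}(x-1)$ (immaterial, since it vanishes by hypothesis either way), and you are right to flag the convention $\binom{\cdot}{-1}=0$ at the boundary of the summation ranges, which the paper also uses tacitly.
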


One can verify Lemma~\ref{lem:binomial} over a finite grid of parameter values using the following Mathematica code.
\begin{verbatim}
In[6]:=
S[n_, x_] := Sum[
  Binomial[x - m, m]*Binomial[m, n - m],
  {m, 0, n}];
AllTrue[
  Flatten @ Table[
    Sum[
      Binomial[x - m, m]*Binomial[y + m, n - m],
      {m, 0, n}] == S[n, x + y],
    {x, -20, 20}, {y, -20, 20}, {n, 0, 20}],
  TrueQ]
AllTrue[
  Flatten @ Table[
    Binomial[x, n] == S[n, x] + S[n - 1, x - 1],
    {x, -20, 20}, {n, 0, 20}],
  TrueQ]
Out[7]= True
Out[8]= True
\end{verbatim}

\begin{proof}[Proof of Theorem~\ref{thm:polynomial}]
We have
\begin{align*}
\MoveEqLeft\sum_{m=0}^n\biggl(\binom{2n-m}m+\binom{2n-m-1}{m-1}\biggr)x^m(x+1)^m\\
&=\sum_{m=0}^n\sum_{k=0}^m\biggl(\binom{2n-m}m+\binom{2n-m-1}{m-1}\biggr)\binom{m}kx^{m+k}\\
&=\sum_{m=0}^{2n}\sum_{k=0}^n\biggl(\binom{2n-k}k+\binom{2n-k-1}{k-1}\biggr)\binom{k}{m-k}x^m\\
&=2x^{2n}+\sum_{m=0}^{2n-1}\bigl(S_m(2n)+S_{m-1}(2n-1)\bigr)x^m\\
&=2x^{2n}+\sum_{m=0}^{2n-1}\binom{2n}mx^m=x^{2n}+(x+1)^{2n}
\end{align*}
and
\begin{align*}
\MoveEqLeft\sum_{m=0}^n\binom{2n-m}m\bigl(x^{m+1}(x+1)^m+x^m(x+1)^{m+1}\bigr)\\
&=(2x+1)\sum_{m=0}^n\binom{2n-m}mx^m(x+1)^m\\
&=(2x+1)\sum_{m=0}^{2n}\sum_{k=0}^n\binom{2n-k}k\binom{k}{m-k}x^m\\
&=(2x+1)\sum_{m=0}^{2n}S_m(2n)x^m\\
&=2x^{2n+1}+\sum_{m=0}^{2n}\bigl(2S_{m-1}(2n)+S_m(2n)\bigr)x^m\\
&=2x^{2n+1}+\sum_{m=0}^{2n}\bigl(S_m(2n+1)+S_{m-1}(2n)\bigr)x^m\\
&=2x^{2n+1}+\sum_{m=0}^{2n}\binom{2n+1}mx^m=x^{2n+1}+(x+1)^{2n+1}
\end{align*}
by Lemma~\ref{lem:binomial}.
\end{proof}

\begin{proof}[Proof of Lemma~\ref{lem:binomial}]
The proof is by induction on the nonnegative integer $n$.
\begin{enumerate}
\item
We have
$$\binom{x}0\binom{y}0=S_0(x+y)=1.$$
Suppose $n>0$. We may assume that the indeterminates $x$ and $y$ are nonnegative
integers by Proposition~\ref{prop:zero}. It is sufficient to show
\begin{equation}\label{eq:increment}
\sum_{m=0}^n\binom{x+1-m}m\binom{y+m}{n-m}=\sum_{m=0}^n\binom{x-m}m\binom{y+1+m}{n-m}.
\end{equation}
We have
\begin{multline}\label{eq:xincrement}
\sum_{m=0}^n\biggl(\binom{x+1-m}m-\binom{x-m}m\biggr)\binom{y+m}{n-m}\\
=\sum_{m=1}^n\binom{x-m}{m-1}\binom{y+m}{n-m}=S_{n-1}(x+y)
\end{multline}
and
\begin{multline}\label{eq:yincrement}
\sum_{m=0}^n\binom{x-m}m\biggl(\binom{y+1+m}{n-m}-\binom{y+m}{n-m}\biggr)\\
\sum_{m=0}^{n-1}\binom{x-m}m\binom{y+m}{n-1-m}=S_{n-1}(x+y)
\end{multline}
by Pascal's identity and the induction hypothesis. We obtain the equation~\eqref{eq:increment} by the equations~\eqref{eq:xincrement} and~\eqref{eq:yincrement}.
\item
We have
$$\binom{x}0=S_0(x)=1.$$
Suppose $n>0$. We may assume that the indeterminate $x$ is a nonnegative integer by Proposition~\ref{prop:zero}. It is sufficient to show
\begin{enumerate}
\item
$S_n(n)+S_{n-1}(n-1)=1$ and
\item
$\displaystyle S_n(x)-S_n(x-1)+S_{n-1}(x-1)-S_{n-1}(x-2)=\binom{x-1}{n-1}$.
\end{enumerate}
\begin{enumerate}
\item
We have
$$S_n(n)=\sum_{m=0}^n\binom{n-m}m\binom{m}{n-m}=\frac{1+(-1)^n}2$$
and
$$S_{n-1}(n-1)=\frac{1+(-1)^{n-1}}2.$$
\item
We have
$$S_n(x)-S_n(x-1)=\sum_{m=1}^n\binom{x-1-m}{m-1}\binom{m}{n-m}=S_{n-1}(x-1)$$
by the first part of this lemma and
$$S_{n-1}(x-1)-S_{n-1}(x-2)=S_{n-2}(x-2).$$
We have
\begin{multline*}
S_n(x)-S_n(x-1)+S_{n-1}(x-1)-S_{n-1}(x-2)\\
=S_{n-1}(x-1)+S_{n-2}(x-2)=\binom{x-1}{n-1}
\end{multline*}
by the induction hypothesis.\qedhere
\end{enumerate}
\end{enumerate}
\end{proof}

\end{document}